\documentclass[12pt,reqno]{amsart}
\usepackage[english]{babel}

\usepackage{mathrsfs}
\usepackage{amsthm}
\usepackage{amssymb}
\usepackage{amsmath,amsthm,amsfonts}
\usepackage{mathabx}
\usepackage{hhline}
\usepackage{textcomp}
\usepackage{amsmath,amscd}
\usepackage[all,cmtip]{xy}
\usepackage{mathtools}
\usepackage[all]{xy}
\usepackage[margin=1in]{geometry}

\author{Kaveh Eftekharinasab}
\title{Geometry of bounded Fr\'{e}chet manifolds}
\address{Topology dept. \\ Institute of Mathematics of NAS of Ukraine \\ Te\-re\-shchen\-kivska st. 3, Kyiv, 01601 Ukraine}
\email{kaveh@imath.kiev.ua}
\keywords{Bounded Fr\'{e}chet manifold, second order tangent bundle, connection, vector field}
\subjclass[2010]{
58B25, %
58A05, %
37C10 %
}

\makeatletter
\@addtoreset{equation}{section}
\@addtoreset{figure}{section}
\@addtoreset{table}{section}
\makeatother

\newtheorem{theorem}{Theorem}[section]
\newtheorem{lemma}{Lemma}[section]
\newtheorem{remk}{Remark}[section]
\newtheorem{prop}{Proposition}[section]
\newtheorem{defn}{Definition}[section]
\newtheorem{cor}{Corollary}[section]
\theoremstyle{definition}

\DeclareMathAlphabet{\mathpzc}{OT1}{pzc}{m}{it}

\newcommand{\pr}{\Pi}

\DeclareMathOperator{\Ind}{Ind}

\begin{document}

\begin{abstract}
In this paper we develop the geometry of bounded Fr\'{e}chet manifolds. We prove that a bounded Fr\'{e}chet tangent bundle admits a vector bundle structure. 
But the second order tangent  bundle  $T^2M$ of a bounded Fr\'{e}chet manifold $M$, becomes a vector bundle over $M$ if and only if $M$ is endowed with a 
linear connection. As an application, we prove  the existence and uniqueness of the integral curve of a vector field on $M$.
\end{abstract}
\maketitle

\section{Introduction}
The geometry of Fr\'{e}chet manifolds has received serious attention in recent years, cf.~\cite{dd} for a survey. In particular, 
second order tangent bundles have been studied due to their applications in the study of second order ordinary differential
equations that arise via geometric objects (such as  autoparallel curves and  parallel translation) on manifolds (see~\cite{4},~\cite{sdgs}).
However, due to intrinsic difficulties with 
Fr\'{e}chet spaces only a certain type of manifolds was considered, namely those Fr\'{e}chet manifolds which can be obtained as projective limit of 
Banach manifolds (PLB-manifolds). It was proved that the second order tangent bundle $T^2M$ of a PLB-manifold $M$ admits a vector 
bundle structure if and only if $M$ is endowed with a linear connection (see~\cite{gd}). 

Some of the basic issues in the theory of Fr\'{e}chet spaces are mainly related with the space of continuous linear mappings. 
Indeed, the space of continuous linear mappings of one Fr\'{e}chet space to another is not a Fr\'{e}chet space in general.
On the other hand, the general linear group of a Fr\'{e}chet space does not admit any non-trivial topological group structure. This defect puts  in question 
the way of defining vector bundle. Another drawback is 
 the lack of a general solvability theory for ordinary differential equations. Because of these reasons, in the framework of Fr\'{e}chet
 bundles an arbitrary connection is hard to handle. 

As remarked, there is a way out of these difficulties for Fr\'{e}chet manifolds which can be obtained as projective limit of Banach manifolds.
However, there is another way to overcome aforementioned problems. Recently, in the suggestive paper~\cite{m}, M\"{u}ller introduced the concept of bounded Fr\'{e}chet manifolds
and provided an inverse function theorem in the sense of Nash and Moser in this category. Such spaces arise in geometry and physical field theory and have
many desirable properties. For instance, the space of all smooth sections of a fiber bundle (over closed or non-compact manifolds), which
is the foremost example of infinite dimensional manifolds, has the structure of a bounded Fr\'{e}chet manifold, see~\cite[Theorem 3.34]{m}. 
As for the importance of bounded Fr\'{e}chet manifolds, we refer
to the paper~\cite{k}, where Sard's theorem was obtained in this category. The statement of the theorem is as follows: Let $ M $ resp. $ N $ be
bounded Fr\'{e}chet manifolds with compatible metrics $ d_M $ resp. $ d_N$ modelled 
on Fr\'{e}chet spaces $ E$ resp. $F $ with standard metrics. Let $ f: M \rightarrow N $ be an 
$ MC^k$- Lipschitz Fredholm map with $  k > \max \lbrace {\Ind f,0} \rbrace $. Then the set of regular
values of $ f $ is residual in $ N $.

One of the essential ideas of this setting is to replace the space of all  continuous linear maps by the space $\mathcal{L}_{d',d}(E,F)$ of all linear Lipschitz continuous maps.
Then  $\mathcal{L}_{d',d}(E,F)$ is a topological group that has satisfactory properties. For example, the composition map 
$\mathcal{L}_{d,g}(F,G) \times \mathcal{L}_{d',d}(E,F) \rightarrow \mathcal{L}_{d',g}(E,G) $ is bilinear continuous. In particular,
the evaluation map $\mathcal{L}_{g,d}(E,F) \times E \rightarrow F$ is continuous. 

Our goal in this paper is to extend to bounded Fr\'{e}chet manifolds the known results of Fr\'{e}chet geometry. 
We define  the tangent bundles $TM$ and $T^2 M$ of a bounded Fr\'{e}chet manifold $M$ modelled on a Fr\'{e}chet space $F$ and prove that they too are endowed 
with bounded Fr\'{e}chet manifold structures of the same type modelled on $F^2$ and $F^4$, respectively. 
In addition, we show that $TM$ admits  a vector bundle structure, which allows us to define a connection on $TM$ via a connection map (cf.~\cite{vas},~\cite{va}).
We shall interpret linear connections as linear systems of ordinary 
differential equations on trivial bundles. Our main result is that  $T^2M$ admits
a vector bundle structure if and only if $M$ is endowed with a linear connection. Moreover, a linear connection
on $M$ determines a vector bundle structure on $T^2M$ and a vector bundle isomorphism  $T^2M \rightarrow TM \oplus TM$. We conclude by proving
the existence and uniqueness of  the integral curve of a vector field on $M$.

 It turns out that  bounded Fr\'{e}chet manifolds have some advantages over both PLB-manifolds and infinite dimensional convenient manifolds.  
In the case of PLB-manifolds, the difficulty is that to construct a geometric object on manifolds we need to establish 
the existence of the projective limit of its Banach corresponding factors. While in the case of convenient manifolds, to construct
geometrical structures on manifolds we need to define the notion of manifolds by charts but this restricts the consequences of 
Cartesian closedness  drastically (see~\cite{km},~\cite{mi}). In addition, for convenient manifolds we have two different kinds of tangent bundles 
(kinematic and operational) and hence we have two different types of vector fields. Another drawback  is that 
operational vector fields do not necessarily have integral curves. On the other hand, for a given kinematic vector field  integral curves may not exist 
locally, and if they exist they may not be unique for the same  initial condition (see~\cite{km}). 
\section{Prerequisites}  
In this section we summarize all the necessary preliminary material that we need
for a self contained presentation of the paper. For detailed studies on bounded Fr\'{e}chet manifolds we refer to~\cite{k},~\cite{glockner} and~\cite{m}. 

We denote by $(F,d)$ a Fr\'{e}chet space  whose 
topology is defined by a complete translational-invariant metric $d$. 
We define $  \parallel f \parallel_d \coloneq d(f,0) $ for $ f \in F $ and  write $ L.f $ instead of $ L(f) $ when $ L $ is a  linear map
between Fr\'{e}chet spaces. A metric with absolutely convex balls will be called a standard metric. Note that
every Fr\'{e}chet space  admits a standard metric which defines its topology: If $\alpha_n$ is an arbitrary sequence of 
positive real numbers converging to $zero$ and if $\rho_n$ is any sequence of continuous semi-norms defining the 
 topology of $F$. Then
\begin{equation*}
d_{\alpha,\, \rho}(e,f)\coloneq \sup_{n \in \mathbb{N}} \alpha _n \dfrac{\rho_n(e-f)}{1+\rho_n(e-f)}
\end{equation*}
is a metric on $F$ with the desired properties.

As mentioned in the Introduction, we replace the space of all linear continuous  maps between Fr\'{e}chet spaces by the space of all linear Lipschitz continuous
maps. Let $(E,g)$ be another Fr\'{e}chet space and let $\mathcal{L}_{g,d}(E,F)$ be the set of all globally linear Lipschitz continuous maps, i.e. linear maps $ L: E \rightarrow F $ such that 
\begin{equation*}
\parallel L \parallel_{g,d}\, \coloneq \displaystyle \sup_{x \in E\setminus\{0\}} \dfrac{\parallel L.x \parallel_d}{\parallel x \parallel_g} < \infty.
\end{equation*}
We abbreviate $\mathcal{L}_{g}(E)\coloneq \mathcal{L}_{g,g}(E,E)$ and write $\parallel L\parallel_{g}\, = \,\parallel L\parallel_{g,g}$ for $L \in \mathcal{L}_{g}(E) $.
 If $d$ is a standard metric, then
\begin{equation} \label{metric}  
 D_{g,d}: \mathcal{L}_{g,d}(E,F) \times \mathcal{L}_{g,d}(E,F) \longrightarrow [0,\infty) , \,\,
(L,H) \mapsto \parallel L-H \parallel_{g,d}
\end{equation}
is a translational-invariant metric on $\mathcal{L}_{d,g}(E,F)$ turning it into an Abelian topological group (see~\cite[Remark 1.9]{glockner}). The latter is not a topological vector space
in general, but a locally convex vector group with absolutely convex balls. We shall always equip Fr\'{e}chet spaces  with standard metrics and define the topology on $\mathcal{L}_{d,g}(E,F)$ by the  metric $D_{g,d}$.
The vector groups $\mathcal{L}_{g,d}^{(i+1)}(F,E) \coloneq (F,\mathcal{L}_{g,d}^i(F,E))$ are defined by induction. 

Let $ E,F $ be Fr\'{e}chet spaces, $ U $ an open subset of $ E $, and $ P:U \rightarrow F $
 a continuous map. Let $CL(E,F)$ be the space of all continuous linear maps from $E$ to $F$ topologized by the compact-open topology. 
 We say $ P $ is differentiable at the point $ p \in U$ if there exists a linear map
$\operatorname{d}P(p): E \rightarrow F$ with $\operatorname{d}P(p)h =  \lim_{t\rightarrow 0}\tfrac{P(p+th)-P(p)}{t}$, for all $ h \in E $. 
If $P$ is differentiable at all points $p \in U$, if $\operatorname{d}P(p) : U \rightarrow CL(E,F)$ is continuous for all
$p \in U$ and if the induced map $ P': U \times E \rightarrow F,\,(u,h) \mapsto \operatorname{d}P(u)h $
 is continuous in the product topology, then we say that $ P $ is Keller-differentiable.
  We define $ P^{(k+1)}: U \times E^{k+1} \rightarrow F $ inductively by
\begin{equation*}
P^{(k+1)}(u,f_{1},...,f_{k+1}) = \lim_{t\rightarrow 0}\dfrac{P^{(k)}(u+tf_{k+1})(f_1,...,f_k)- P^{(k)}(u)(f_1,...,f_k)}{t}.
\end{equation*}

If $P$ is Keller-differentiable, $ \operatorname{d}P(p) \in \mathcal{L}_{d,g}(E,F) $ for all $ p \in U $, and the induced map 
$ \operatorname{d}P(p) : U \rightarrow \mathcal{L}_{d,g}(E,F)   $ is continuous, then $ P $ is called b-differentiable. We say $ P $ is $ MC^{0} $ and write $ P^0 = P $ if it is continuous. 
We say $P$ is an $ MC^{1} $ and write  $P^{(1)} = P' $ if it is b-differentiable. Let $ \mathcal{L}_{d,g}(E,F)_0 $ be 
the connected component of $ \mathcal{L}_{d,g}(E,F) $ containing the zero map. If $ P $ is b-differentiable and if 
 $V \subseteq U$ is a connected open neighborhood of $x_0 \in U$, then $P'(V)$ is connected and hence contained in the connected component
$P'(x_0) +  \mathcal{L}_{d,g}(E,F)_0 $ of $P'(x_0)$ in $\mathcal{L}_{d,g}(E,F)$. Thus, $P'\mid_V - P'(x_0):V \rightarrow \mathcal{L}_{d,g}(E,F)_0 $
 is again a map between subsets of Fr\'{e}chet spaces. This enables a recursive definition: If $P$ is $MC^1$ and $V$ can be chosen for each
 $x_0 \in U$ such that $P'\mid_V - P'(x_0):V \rightarrow \mathcal{L}_{d,g}(E,F)_0 $ is $ MC^{k-1} $, 
then $ P $ is called an $ MC^k$-map. We make a piecewise definition of $P^{(k)}$ by $ P^{(k)}\mid_V \coloneq \left(P'\mid_V - P'(x_0)\right)^{(k-1)} $
for $x_0$ and $V$ as before. 
The map $ P $ is $ MC^{\infty} $ if it is $ MC^k $ for all $ k \in \mathbb{N}_0 $. We shall denote by $\operatorname{D},\operatorname{D^2}$  the first and the second differential, respectively.

A bounded Fr\'{e}chet manifold is a Hausdorff second countable topological space with an atlas of coordinate 
charts taking their values in Fr\'{e}chet spaces such that the coordinate transition functions are all
 $ MC^{\infty} $-maps. 
 
We will need to consider the space of all globally Lipschitz continuous $k$-multilinear maps. 
Let $B =\prod_{i=1}^{i=k} F_i$ be the topological product of any finite number $k$ of Fr\'{e}chet spaces $(F_1,d_1),\ldots,(F_k,d_k)$.
For $x=(x_1,\ldots,x_k) \in B$ and  $y=(y_1,\ldots,y_k) \in B$, we define
the maximum metric $d_{max}$ as follows: $d_{max}(x,y)= \max _{1 \leq i \leq k} d_i(x_i,y_i)$. We shall always use this metric on $B$.
Let $(F_1,d_1),\ldots,(F_k,d_k)$ and $(F,d)$ be Fr\'{e}chet spaces. The space of all globally Lipschitz continuous $k$-multilinear maps 
is the space of all $k$-multilinear maps $L: F_1\times   \ldots \times F_k \rightarrow F$ such that for all $f_i \in F_i \setminus\{0\}$,
$1\leq i \leq k$, 
\begin{equation*}
\parallel L \parallel_{d_1,\ldots,d_k,d} \,\,\, \coloneq \displaystyle \sup_{f_i \in F_i \setminus\{0\}}  \dfrac{\parallel L(f_1,\ldots,f_k) \parallel_{d}}{\parallel f_1 \parallel_{d_1}\ldots \parallel f_k \parallel_{d_k}}   < \infty.
\end{equation*}
This space is denoted by $\mathcal{L}_{d_1,\ldots d_k,d}(F_1 ,\ldots,F_k;F )$.
We define on the latter space a metric
\begin{equation*}
 D_{d_1,\ldots,d_k,d}(L,H) = \, \parallel L - H \parallel_{{d_1,\ldots,d_k,d}}
\end{equation*}
which makes it into an Abelian topological group. 

Throughout the paper, we suppose that $d_1,\ldots ,d_k,d$ are fixed metrics and we will not write them when they appear as indices in the notations
to make the notations more readable.

\paragraph{Convention.} The terms bounded Fr\'{e}chet tangent bundle and bounded Fr\'{e}chet second order tangent bundle are too long, so we remove 
 ``bounded Fr\'{e}chet'' from the terms.
\section{Constructions of $TM$ and $T^2M$}\label{tangs}
In this section we construct $TM$ and $T^2M$ based on the work  of Yano and Ishihara~\cite{ish}.
\subsection{ Tangent bundle.} 
 
Let $M$ be a bounded Fr\'{e}chet manifold modelled on a Fr\'{e}chet space $F$. Let $\mathcal{MC}_p(M)$ be the set of 
 all $MC^{\infty}$- mappings $\mathpzc{f}:\mathbb{R}\rightarrow M$ that send $zero$ to $p \in M$.
 We define on $\mathcal{MC}_p(M)$ an equivalence relation $\sim$ as follows: Let $\Phi=\{ (U_\alpha,\varphi_\alpha)\}_{\alpha \in \mathcal{A}}$
 be a compatible atlas for $M$, $(p\in U_{\alpha},\varphi_{\alpha})$ an admissible chart, and $\mathpzc{f},\,\mathpzc{g} \in \mathcal{MC}_p(M)$.
 Let $r$ be a fixed natural number.
 We say that $\mathpzc{f}$ and $\mathpzc{g}$ are equivalent and write $\mathpzc{f}\sim \mathpzc{g}$ if they satisfy the following:
 \begin{equation} \label{eq}
  (\varphi_{\alpha}\circ \mathpzc{f})'(0) = (\varphi_{\alpha} \circ \mathpzc{g})'(0),\cdots  , (\varphi_{\alpha} \circ \mathpzc{f})^r(0) = (\varphi_{\alpha} \circ \mathpzc{g})^r(0),
 \end{equation}
where the orders of the derivatives run between 1 and $r$. 
It follows from the chain rule for $MC^k$-maps (see~\cite[Lemma B.1 (f)]{glockner})
that the equivalency  at a point $p$ is well defined. The equivalence class containing a mapping
$\mathpzc{f} \in  \mathcal{MC}_p(M)$ is called the $r$-jet of $\mathpzc{f}$ at $p$ and is denoted by $j^r_p \mathpzc{f}$.

 Let $TM$ be the set of all 1-jets of $M$ and let $\pi_M: TM \rightarrow M$ be a natural projection. The fiber $ \pi^{-1}_M(p)$ is the tangent space $T_pM$.
 The space $T_pM$ has the structure of a Fr\'{e}chet space which is isomorphic to $F$ by means of the mapping $\varphi_{\alpha} \circ \pi_M : T_pF \rightarrow F$ given by
 $j^1_p\mathpzc{f} \mapsto \varphi_{\alpha}(p)$. It is easily verified that this structure of $T_pM$ is independent of the choice of the chart $(U_{\alpha},\varphi_{\alpha})$. 
 Then $TM$ is the disjoint union of the tangent spaces $T_pM$ and is called the tangent bundle over $M$.  
 Let $h: M \rightarrow N$ be an $MC^k$-map of manifolds. 
 The tangent map $Th :TM \rightarrow TN$ is defined by $T h(j^1_p(\mathpzc{f}))  = j^1_{h(p)}(h\circ \mathpzc{f})$.

The following lemma  is fundamental for constructing  trivializing atlases and  vector bundle structures for $TM$ and $T^2M
$. 
\begin{lemma}\label{bundiso}
\begin{description}
 \item[(i)] Let $h: M \rightarrow N$ and $g : N \rightarrow K$ be $MC^k$-maps of manifolds. Then $T (h \circ g) = Tg \circ Th$. 
 \item[(ii)] If $h: M \rightarrow N$ is an $MC^k$-diffeomorphism, then $Th:TM \rightarrow TN$ is a bijection and $(Th)^{-1}= T(h^{-1})$.
 \item [(iii)] Let $h : U \subset E \rightarrow V \subset F$ be a diffeomorphism of open sets of Fr\'{e}chet spaces. The tangent map $Th: U \times F \rightarrow V\times E $ is a local vector bundle isomorphism.
 \item [(iv)]\label{lk} If $h :U \subset E \rightarrow V \subset F$ is an $MC^k$-diffeomorphism of open sets of Fr\'{e}chet spaces, then  $Th$ is an $MC^{k-1}$-diffeomorphism.
\end{description}
\end{lemma}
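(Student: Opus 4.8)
The plan is to treat the four statements in order, deriving each from the defining formula $Th(j^1_p\mathpzc{f})=j^1_{h(p)}(h\circ\mathpzc{f})$ together with the chain rule for $MC^k$-maps already invoked in the construction of $TM$.

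For (i) I would argue purely formally. Given a representing curve $\mathpzc{f}\in\mathcal{MC}_p(M)$, the defining formula applied twice yields $Tg(Th(j^1_p\mathpzc{f}))=Tg(j^1_{h(p)}(h\circ\mathpzc{f}))=j^1_{g(h(p))}(g\circ(h\circ\mathpzc{f}))$, while $T(g\circ h)(j^1_p\mathpzc{f})=j^1_{(g\circ h)(p)}((g\circ h)\circ\mathpzc{f})$; associativity of composition makes the two jets coincide, so functoriality holds. The only thing to verify is independence of the chosen representative, which is precisely the well-definedness already established via the $MC^k$ chain rule. Part (ii) is then immediate: taking $g=h^{-1}$ and using $T(\mathrm{id})=\mathrm{id}$ (clear from the formula, since $\mathrm{id}\circ\mathpzc{f}=\mathpzc{f}$), the identities $h^{-1}\circ h=\mathrm{id}_M$ and $h\circ h^{-1}=\mathrm{id}_N$ give $T(h^{-1})\circ Th=\mathrm{id}_{TM}$ and $Th\circ T(h^{-1})=\mathrm{id}_{TN}$, so $Th$ is a bijection with inverse $T(h^{-1})$.

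For (iii) I would pass to the local description. Identifying the tangent bundle of an open set $U\subset E$ with the trivial bundle $U\times E$, a tangent vector at $x$ is represented by a curve $\mathpzc{f}$ with $\mathpzc{f}(0)=x$ and $\mathpzc{f}'(0)=v$; the chain rule then gives $Th(x,v)=(h(x),(h\circ\mathpzc{f})'(0))=(h(x),\operatorname{d}_xh.v)$. Thus $Th$ has local vector bundle form, being fibrewise the linear map $\operatorname{d}_xh\in\mathcal{L}(E,F)$. It remains to see that this fibre map is a toplinear isomorphism: differentiating $h^{-1}\circ h=\mathrm{id}_U$ at $x$ shows $\operatorname{d}_{h(x)}(h^{-1})\circ\operatorname{d}_xh=\mathrm{id}_E$ and symmetrically, so $(\operatorname{d}_xh)^{-1}=\operatorname{d}_{h(x)}(h^{-1})$ again lies in $\mathcal{L}$. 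Hence $Th$ is a local vector bundle isomorphism.

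Part (iv) is where the bounded-Fr\'{e}chet machinery does the real work, and I expect it to be the main obstacle. Writing $Th(x,v)=(h(x),\operatorname{d}_xh.v)$, the first component is $MC^k$ and hence $MC^{k-1}$; for the second I would factor it as the composition of $(x,v)\mapsto(\operatorname{d}_xh,v)$ with the evaluation map $\mathcal{L}(E,F)\times E\to F$. By the very definition of $MC^k$-regularity the assignment $x\mapsto\operatorname{d}_xh$ is $MC^{k-1}$, and the evaluation map is continuous and bilinear (Introduction), whence $MC^{\infty}$ in this category. Granting that composites of $MC^{k-1}$-maps with such evaluation remain $MC^{k-1}$—the delicate point, resting on Gl\"{o}ckner's chain rule and the stability of $MC^k$ under the relevant bilinear operations—one concludes $Th$ is $MC^{k-1}$. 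Finally, since $h^{-1}$ is also an $MC^k$-diffeomorphism, the same estimate shows $T(h^{-1})$ is $MC^{k-1}$, and by (ii) it is the inverse of $Th$; therefore $Th$ is an $MC^{k-1}$-diffeomorphism.
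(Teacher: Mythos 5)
Your proposal is correct and follows essentially the same route as the paper: parts (i)--(iii) are argued identically (functoriality from the defining jet formula, inversion via $T(\mathrm{id})=\mathrm{id}$, and local vector bundle morphism together with its morphism inverse), and part (iv) rests, as in the paper, on identifying $Th$ locally with $(x,v)\mapsto(h(x),\operatorname{d}_xh.v)$. The only difference is presentational: the paper realizes this identification through the curve-based map $\lambda(u,e)=j^1_u(u+et)$ and then asserts the $MC^{k-1}$ regularity with a bare appeal to the earlier parts, whereas you factor the second component through the evaluation map $\mathcal{L}(E,F)\times E\to F$ and explicitly flag the composition step as the delicate point---so your version is, if anything, more candid about where the analytic content lies.
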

\begin{proof}
\begin{description}
 \item[(i)] $g \circ h$ is $MC^k$ (\cite[Lemma B.1 (f)]{glockner}). Furthermore,
 \begin{equation*}
  T(g \circ h)(j^1_p \mathpzc{f}) = j^1_{(g\circ h)(p)}(g\circ h \circ \mathpzc{f}) = Tg(j^1_{h(p)})( h \circ \mathpzc{f}) = (Tg \circ Th)(j^1_p \mathpzc{f}).
 \end{equation*}
 \item[(ii)] By the previous part and the definition of the tangent map, $Th \circ Th^{-1} = T \mathrm{id}_{TN}$ while $Th^{-1} \circ Th = T \mathrm{id}_{TM}$.
 \item[(iii)] $Th$ is a local vector bundle morphism. Since $h$ is a diffeomorphism it follows that $(Th)^{-1}= T(h^{-1})$ is a local vector bundle morphism, thus $Th$ is
 a vector bundle isomorphism.
 \item[(iv)] Let $C$ be a curve passing through $u \in U$ such that $\operatorname{D}C(0)\cdotp 1 = e$ for a given $e \in F$. Define the map $\eta(t):\mathbb{R} \rightarrow E$  by $\eta(t) = u + e t$ which
 is tangent to $C$ at $t = 0 $. 
 Define $\lambda : U \times F \rightarrow TU$ by $\lambda (u,e) = j^1_u(\eta(t))$.
 We have $(Th \circ \lambda)(u,e) = Th\cdotp j^1_u (\eta(t)) = j^1_{h(u)}(h \circ \eta(t))$. Also we have $(\lambda \circ h')(u,e) = \lambda (h(u), \operatorname {D}h(u)\cdotp e) = j^1_{h(u)}(h(u)+ (\operatorname {D}h(u)\ldotp e)t)$.
These are equal because the curves $t \mapsto h (u + et)$ and $t \mapsto h(u) +  (\operatorname {D}h(u)\ldotp e)t$ are tangent at $0$ by the definition
of the derivative and the previous parts. Therefore, $Th \circ \lambda = \lambda \circ h'$, which means
  $\lambda$ identifies $U \times E$ with $TU$. Correspondingly, we can identify $h'$ with $Th$, so the results of earlier parts imply statement (iv).
 \end{description}
\end{proof}
\begin{prop}
 Let $\pi_M : TM \rightarrow M$ be a tangent bundle. Then the atlas $\{ (U_\alpha,\varphi_\alpha)\}_{\alpha \in \mathcal{A}}$ gives rise to a trivializing atlas
$\{ (\pi_M^{-1}(U_\alpha),T\varphi_\alpha)\}_{\alpha \in \mathcal{A}}$ on $TM$, with
\begin{equation*}
 T\varphi_\alpha : \pi_M^{-1}(U_\alpha) \rightarrow \varphi_\alpha (U_\alpha)\times F , \quad j^1_p(\mathpzc{f}) \mapsto (\varphi_\alpha(p), (\varphi_\alpha \circ \mathpzc{f})'(0));\,\,\mathpzc{f} \in \mathcal{MC}_p(M).
\end{equation*}
This makes $TM$ into a bounded Fr\'{e}chet manifold modelled on $ F \times F$.
\end{prop}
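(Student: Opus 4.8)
The plan is to show in turn that each $T\varphi_\alpha$ is a well-defined bijection onto an open subset of $F\times F$, that the associated transition maps are $MC^\infty$-diffeomorphisms, and that the topology they determine on $TM$ is Hausdorff and second countable.

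First I would verify that $T\varphi_\alpha$ is well defined on $1$-jets and bijective. If $\mathpzc{f}\sim\mathpzc{g}$ in $\mathcal{MC}_p(M)$, then taking $r=1$ in the defining relation~\eqref{eq} yields $(\varphi_\alpha\circ\mathpzc{f})'(0)=(\varphi_\alpha\circ\mathpzc{g})'(0)$, so the value assigned to $j^1_p\mathpzc{f}$ is independent of the representative; the same relation read backwards gives injectivity of $T\varphi_\alpha$. For surjectivity, given $(x,v)\in\varphi_\alpha(U_\alpha)\times F$ I would set $p=\varphi_\alpha^{-1}(x)$ and use the curve $\eta(t)=\varphi_\alpha^{-1}(x+tv)$, defined for $t$ near $0$, so that $T\varphi_\alpha(j^1_p\eta)=(x,v)$. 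Hence $T\varphi_\alpha$ is a bijection onto the open set $\varphi_\alpha(U_\alpha)\times F$.

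Next I would treat the transition maps, which is where Lemma~\ref{bundiso} does the real work. For $U_\alpha\cap U_\beta\neq\varnothing$ write $\varphi_{\beta\alpha}=\varphi_\beta\circ\varphi_\alpha^{-1}$, an $MC^\infty$-diffeomorphism by the definition of a bounded Fr\'{e}chet manifold. Parts (i) and (ii) of Lemma~\ref{bundiso} give
\begin{equation*}
T\varphi_\beta\circ(T\varphi_\alpha)^{-1}=T(\varphi_\beta\circ\varphi_\alpha^{-1})=T\varphi_{\beta\alpha},
\end{equation*}
and the identification of $T\varphi_{\beta\alpha}$ from the proof of part (iv) exhibits its local form as
\begin{equation*}
T\varphi_{\beta\alpha}(x,v)=\bigl(\varphi_{\beta\alpha}(x),\,\operatorname{D}\varphi_{\beta\alpha}(x)\ldotp v\bigr).
\end{equation*}
Since $\varphi_{\beta\alpha}$ is $MC^k$ for every $k$, part (iv) shows $T\varphi_{\beta\alpha}$ is $MC^{k-1}$ for every $k$, hence $MC^\infty$, and the same applies to $T\varphi_{\alpha\beta}$. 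Thus the family $\{(\pi_M^{-1}(U_\alpha),T\varphi_\alpha)\}_{\alpha\in\mathcal{A}}$ consists of charts with $MC^\infty$ transition functions.

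Finally I would put on $TM$ the topology for which every $T\varphi_\alpha$ is a homeomorphism onto its image; this is well posed precisely because the transition maps are homeomorphisms. Since $M$ is second countable and $F$, as a metrizable Fr\'{e}chet space, is second countable, $TM$ is covered by countably many charts with second countable images and is therefore second countable. For the Hausdorff property, two distinct points of $TM$ either project to distinct points of $M$, where disjoint chart domains separate them after pulling back by $\pi_M$, or lie in one fibre, where $F$ separates them; in either case one obtains disjoint neighborhoods. With model space $F\times F$ and $MC^\infty$ transitions, $TM$ is a bounded Fr\'{e}chet manifold of the same type as $M$. The one genuinely substantive step is the smoothness of the transition maps, and this is delivered directly by Lemma~\ref{bundiso}(iv); the remaining verifications are routine.
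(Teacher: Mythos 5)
Your proof is correct and takes the same route as the paper: the paper's entire proof of this proposition is the single sentence that it follows from Lemma~\ref{bundiso}, and the substantive step in your write-up --- identifying the transition maps as $T\varphi_\beta\circ(T\varphi_\alpha)^{-1}=T(\varphi_\beta\circ\varphi_\alpha^{-1})$ and invoking parts (i), (ii) and (iv) of that lemma to obtain $MC^\infty$-smoothness --- is exactly the content of that citation, while the chart-level verifications you add (well-definedness on $1$-jets, bijectivity, the topology on $TM$) are the routine details the paper leaves implicit.

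One justification needs repair, though it does not affect the conclusion: you assert that $F$, as a metrizable Fr\'{e}chet space, is second countable, but metrizability does not imply separability --- $\ell^\infty$ is a Fr\'{e}chet (indeed Banach) space that is not second countable. The correct argument runs in the other direction: since $M$ is by definition second countable and (assuming it nonempty) has a chart, some image $\varphi_\alpha(U_\alpha)$ is a nonempty second countable open subset of $F$; then for any $x_0\in\varphi_\alpha(U_\alpha)$ the identity $F=\bigcup_{n\in\mathbb{N}}n\bigl(\varphi_\alpha(U_\alpha)-x_0\bigr)$ exhibits $F$ as a countable union of open second countable subspaces, so $F$ itself is second countable. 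With that substitution, your countability argument for $TM$ goes through verbatim.
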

\begin{proof}
It follows from Lemma~\ref{bundiso}. 
\end{proof}
We will apply the definition of differentiable vector bundles due to Neeb \cite{neeb}. We will need the following notion of differentiability which is the adaption of the differentiability given for Keller $ C_c^k$-maps in \cite[Definition II.3.1]{neeb}. 
\begin{defn}\label{def:neeb}
	Let $ {M} $ be an $ MC^k,\, (k\geq 1) $ Fr\'{e}chet manifold, and $ \mathrm{Diff}({M}) $ the group of
	diffeomorphisms of $ {M} $. Further, let $ {N} $ be an $ MC^k$  Fr\'{e}chet manifold. Although, in general, $ \mathrm{Diff}({M} ) $ has
	no natural Lie group structure,  a map $ \varphi: {N} \to \mathrm{Diff}({M})$ is said to be $ MC^k$, if the following map 	is of class $ MC^k$:
	\begin{equation*}
	\widehat{\varphi} : {N} \times {M} \to {M} \times {M}, \quad (n,x) \mapsto (\varphi(n)(x),\varphi^{-1}(n)(x))
	\end{equation*}
\end{defn}
\begin{defn}
Let ${M}$ be an $ MC^k $-Fr\'{e}chet manifold modeled on a Fr\'{e}chet space ${F}$, $ k\geq1 $, and $ {E} $ another Fr\'{e}chet space.	
A $MC^r$-{vector bundle} of type ${E}$ over ${M}$ is a triple $(\pr, {V}, {E})$, consisting of a $MC^r$-manifold $ {V} $, a $ C^r $-map $ \pr: {V} \to {M} $ and a Fr\'{e}chet space $ {E} $, with the following properties:
\begin{description}
	\item[(VB.1)]  $ \forall m \in {M} $, the fiber $ {V}_m := \pr^{-1}(m) $ is a Fr\'{e}chet space 
	isomorphic to ${E} $.
	\item[(VB.2)] Each  $ m \in {M} $ has an open neighborhood $ U $ for which there exists a diffeomorphism
	\begin{equation*}
	\phi_U:\pr^{-1}(U) \to U \times {E} 
	\end{equation*}
	with $ \phi_U = (\pr \rvert_U, \psi_U) $, where $ \psi_U  : \pr^{-1}(U) \to {E} $ is linear on each  $ {V}_m, m \in U$.		
\end{description}
We then call $ U $ a trivializing subset of $ {M} $ and $ \phi_U $ a bundle chart. If $ \phi_U $ and $ \phi_V $ are two bundle charts and $ U \cap V \neq \emptyset $, then we obtain a diffeomorphism
\begin{equation*}
\phi_U \circ  \phi_V^{-1}: \phi_V(U \cap V) \times {E} \to \phi_U(U \cap V) \times {E}
\end{equation*}
of the form $ (x, v) \mapsto (x, \psi_{VU} (x)v) $. This leads to a map
$ \psi_{UV} : U \cap V \to \mathbf{GL}({E}) $
for which it does not make sense to speak about smoothness because $ \mathbf{GL}({E}) $ is not a Lie group. Nevertheless, $ \psi_{UV} $ is of class $ C^r $ in the sense (Definition \ref{def:neeb}) that the map
\begin{gather*}
\widehat{\psi_{UV}}: (U \cap V) \times {E} \to (U \cap V) \times {E} \\
(x,v) \mapsto
(\psi_{UV}(x)v,\psi_{UV}(x)^{-1}v) =(\psi_{UV}(x)v,\psi_{VU}(x)v)
\end{gather*}
is of class $ C^r $. Here, $ \mathbf{GL}({E}) $ is the general linear group of $ {E} $.
\end{defn}
\begin{theorem}
Let ${M}$ be an $ MC^k $-Fr\'{e}chet manifold modeled on a Fr\'{e}chet space ${F}$, $ k\geq1 $.	
 $TM$ admits a vector bundle structure over $M$ with fiber of type $F$.
\end{theorem}
\begin{proof}
 Consider the above atlas of $M$ and its corresponding trivializing atlas for $TM$. Let $\operatorname{\pi r_1},\operatorname{\pi r_2}$ be the projections
 to the first and the second factors, respectively. For all $\alpha \in \mathcal{A}$ we have $ \operatorname{\pi r_1} \circ T\varphi_{\alpha} = \pi_M$, therefore $TM$ is a fiber bundle. Suppose $ U_{\alpha} \cap U_{\beta} \neq 0$, then
 by Lemma~\ref{bundiso} (iii) the overlap map
 \begin{equation*}
  T\varphi_{\alpha} \circ T\varphi_{\beta}^{-1}: \varphi_{\beta} (U_{\alpha} \cap U_{\beta}) \times F \rightarrow  \varphi_{\alpha} (U_{\alpha} \cap U_{\beta}) \times F
 \end{equation*}
is a local vector bundle isomorphism.  Let $\Theta_{\alpha\beta} = T\varphi_\alpha \circ T\varphi_{\beta}^{-1}$ be the transition map. 
The following map
\begin{gather*}
\widehat{\Theta_{\alpha\beta}}: \varphi_{\beta}(U \cap V) \times {E} \to \varphi_{\alpha}(U \cap V) \times {E} \\
(x,v) \mapsto
(\Theta_{\alpha\beta}(x)v,\Theta_{\alpha\beta}(x)^{-1}v) =(\Theta_{\alpha\beta}(x)v,\Theta_{\beta\alpha}(x)v)
\end{gather*}
is an $ MC^{k-1} $ morphism. Thus, $TM$ is an  $ MC^{k-1} $ vector bundle over $M$.
\end{proof}
\subsection{Second order tangent bundle.}

Now that $TM$ is a manifold we can define second order tangents:  
Assume $r=2$ in the equivalence relation~\eqref{eq}. Let $T^2_pM $ be the set of all $2$-jets at $p$ and let $T^2M \coloneq \bigcup _{p \in M}T^2_pM$.
Let $\Pi_{TM} : T^2M \rightarrow M$ be a natural projection defined by $\Pi_{TM}(j^2_p(\mathpzc{f})) = p$. If we topologize $T^2M$ in a natural way,
then $T^2M$ is called the second order tangent bundle over $M$. 

By virtue of
Lemma~\ref{bundiso}, we have a trivializing atlas $\{(\Pi^{-1}_{TM}(\pi_M^{-1}(U_\alpha)),\widetilde{\varPhi}_\alpha)\}_{\alpha \in \mathcal{A}}$
for $T^2M $ with
\begin{equation*}
 \widetilde{\varPhi}_\alpha : \Pi^{-1}_{TM}(\pi_M^{-1}(U_\alpha)) \rightarrow \varphi_\alpha (U_\alpha)\times F ,\quad j^2_p(\mathpzc{f}) \mapsto (\varphi_\alpha(p), (\varphi_\alpha \circ \mathpzc{f})''(0)); \,\, \mathpzc{f} \in \mathcal{MC}_p(M).
\end{equation*}
$T^2_pM$ can be identified with $F \times F$ under the isomorphism:
\begin{equation*}
 \varPsi : T^2_p M \rightarrow F \times F, \quad j^2_p(\mathpzc{f}) \mapsto ((\varphi_\alpha \circ \mathpzc{f})'(0),(\varphi_\alpha \circ \mathpzc{f})''(0)),
\end{equation*}
but fails to be a vector bundle over $M$ because the trivializing isomorphism does not respect the linear structure of the fibers. The submersion $\pi_{12} : T^2M \rightarrow TM$
defined by $ \pi_{12}(j^2_p(\mathpzc{f})) = j^1_p(\mathpzc {f})$ is a vector bundle. Let $\pi_2 :T(TM)\rightarrow TM$ be an ordinary
tangent bundle over $TM$. The space $T^2M$ coincides with
\begin{equation} 
 \left\lbrace \Upsilon \in T(TM) \mid \pi_2 (\Upsilon) = T\pi_M (\Upsilon)\right\rbrace,
\end{equation} 
 and can be identified with a submanifold of $T(TM)$, see~\cite[Proposition 3.2, p. 372 ]{tan}. The bundle $T(TM)$ is a fiber bundle over $M$ with the 
 projection $\pi^2 = \pi_M \circ T\pi_M $. The restriction $\pi^2 \mid _{T^2M} :{T^2M} \rightarrow M$ is again a fiber bundle.

\section{Connection}
In this section we define connections by using Vilms~\cite{va} point of view for connections on infinite dimensional vector bundles. Also,
we show that each linear connection corresponds in a bijective way to an ordinary differential equation analogous to the case of Banach
manifolds (see~\cite{vas}).

 Henceforth, we keep the formalism of Section~\ref{tangs} for tangent bundles and second order tangent bundles.
\begin{defn}
A \emph{strengthened connection map} for $ T{M} $ is a map $ \mathcal{K}: T(T{M}) \to T{M}  $, which is fully
determined by its local form:
\begin{gather*}
\mathcal{K}_{\alpha}\coloneq \varphi_\alpha \circ \mathcal{K} \circ (\widetilde{\varPhi}_\alpha))^{-1},\\
 \varphi_\alpha(U_\alpha) \times {F} \times {F}\times  {F} \to \varphi_\alpha(U_\alpha) \times {F},\quad
 \mathbb{K}_{\alpha} = (f,g,h,k)=  (f,k+\tau_{\alpha}(f,g) h),
 \end{gather*}
 for a family of  mappings $$ \tau_{\alpha}: \varphi_\alpha(U_\alpha) \times {F}  \to \mathcal{L}_{{d}}({F})^{\times} .$$
 Here $ \mathcal{L}_{{d}}({F})^{\times} $ is a subset of $ \mathcal{L}_{{d}}({F}) $ consists of invertiable mappings.
 The mapping $ \tau_{\alpha} $ is $ MC^{k-1} $ in the sense that the map
 \begin{equation*}
 \widehat{\tau_{\alpha}}: (\varphi_\alpha(U_\alpha) \times {F}) \times {F} \to {F} \times {F},\quad
 (x,y,h) \mapsto (\tau_{\alpha}(x,y)(h), \tau_{\alpha}^{-1}(x,y)(h))
 \end{equation*}
 is $ MC^{k-1} $, see Definition \ref{def:neeb}. It follows of course that $ \mathbb{K} $ is of class $ MC^{k-1} $. 
\end{defn}
\begin{remk}
	In the case of Banach manifolds, it is not required that the maps $ \tau_{\alpha} $ be invertible. However, we require that they be invertible to compensate for  the fact that $ \mathcal{L}_{{F}}({F}) $ is not a Fr\'{e}chet manifold. 
\end{remk}
A connection on $M$ is a connection
map on the tangent bundle $\pi_M : TM \rightarrow M$.  
A connection $\mathcal{K}$ is linear if and only if it is linear on the fibers of the tangent map. Locally $T\pi$ is the map $U_{\alpha}\times F \times F \times F \rightarrow U_{\alpha} \times F$ 
defined by $T\pi(f,\xi,h,\gamma)= (f,h)$, hence locally its fibers are the spaces $\{f\}\times F \times \{h\} \times F$. Therefore, $\mathcal{K}$ is linear on these fibers
if and only if the maps $(g,k)\mapsto k+ \tau_{\alpha}(f,g)h$ are linear, and this means that the mappings $\tau_{\alpha}$ need to be linear
with respect to the second variable.

Assume that the connection $\mathcal{K}$ is linear and $f \in U_{\alpha}$. By the canonical isomorphism of Lemma~\ref{iso} to $\tau_{\alpha}(f,\ldotp) \in \mathcal{L}(F,\mathcal{L}(F,F)) \cong \mathcal{L}(F\times F;F)$
is associated the unique local Christoffel symbol  $\Gamma_{\alpha}(p) :\varphi_{\alpha}(U_{\alpha})\rightarrow \mathcal{L}(F\times F;F)$ satisfying $\Gamma_{\alpha}(p)(g,h) = \tau_{\alpha}(p,g)h$. 
Christoffel symbols satisfy the following compatibility condition (cf.~\cite{P}):
\begin{equation}\label{ci}
 \tau_{\alpha}\Big(\operatorname{D}\Theta_{\alpha \beta}(f)(g),\operatorname{D}\Theta_{\alpha \beta}(f)(h)\Big)(\Theta_{\alpha \beta}(f)) +
 (\operatorname{D^2}\Theta_{\alpha \beta}(f)(h))(g) = \operatorname{D}\Theta_{\alpha \beta}(f)(\tau_\beta(g,h)(f))
\end{equation}
 for all $(f,g,h )\in \varphi_\alpha (U_\alpha \cap U_\beta)\times F\times F$. Here, $\Theta_{\alpha \beta}\coloneq \varphi_{\alpha} \circ \varphi^{-1}_{\beta} $. 

 \begin{theorem}\label{tang1}
  Every linear connection on $M$ induces a vector bundle structure on   $\pi^2 \mid _{T^2M} :{T^2M} \rightarrow M$
  and gives rise to an isomorphism of this vector bundle with the vector bundle $TM\oplus TM$.
 \end{theorem}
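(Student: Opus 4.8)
The plan is to use the Christoffel symbols furnished by the linear connection to convert the natural, non-linear trivializations of $T^2M$ into linear ones. On $(\pi^2\mid_{T^2M})^{-1}(U_\alpha)$ I would write $j^2_p(\mathpzc{f})$ in the coordinates $(f,g,k)=(\varphi_\alpha(p),(\varphi_\alpha\circ\mathpzc{f})'(0),(\varphi_\alpha\circ\mathpzc{f})''(0))$ and define the adjusted trivialization
\[
\Psi_\alpha(j^2_p(\mathpzc{f})) = \bigl(f,\, g,\, k+\Gamma_\alpha(f)(g,g)\bigr)\in\varphi_\alpha(U_\alpha)\times F\times F,
\]
where $\Gamma_\alpha(f)(g,h)=\tau_\alpha(f,g)h$ is the Christoffel symbol attached to $\mathcal{K}$. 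On each fibre the assignment $(g,k)\mapsto(g,\,k+\Gamma_\alpha(f)(g,g))$ is a bijection with inverse $(g,w)\mapsto(g,\,w-\Gamma_\alpha(f)(g,g))$, so $\Psi_\alpha$ is a fibrewise bijection; that it is $MC^\infty$ follows from the smoothness of $\tau_\alpha$ together with the continuity of the evaluation and composition maps in $\mathcal{L}_d(F)$ recalled in the Introduction.

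The decisive step is to compute the overlap maps $\Psi_\alpha\circ\Psi_\beta^{-1}$. Applying the chain rule for $MC^k$-maps to $\varphi_\alpha\circ\mathpzc{f}=\Theta_{\alpha\beta}\circ(\varphi_\beta\circ\mathpzc{f})$ and evaluating at $0$, with $f=\varphi_\beta(p)$, I obtain
\[
g_\alpha=\operatorname{D}\Theta_{\alpha\beta}(f)\cdot g_\beta,\qquad
k_\alpha=\operatorname{D^2}\Theta_{\alpha\beta}(f)(g_\beta,g_\beta)+\operatorname{D}\Theta_{\alpha\beta}(f)\cdot k_\beta.
\]
The quadratic term $\operatorname{D^2}\Theta_{\alpha\beta}(f)(g_\beta,g_\beta)$ is exactly what obstructs linearity of the naive trivializations. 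Substituting these expressions into $k_\alpha+\Gamma_\alpha(\Theta_{\alpha\beta}(f))(g_\alpha,g_\alpha)$ and invoking the compatibility condition~\eqref{ci} with $g=h=g_\beta$ cancels this quadratic term, leaving
\[
k_\alpha+\Gamma_\alpha(\Theta_{\alpha\beta}(f))(g_\alpha,g_\alpha)=\operatorname{D}\Theta_{\alpha\beta}(f)\cdot\bigl(k_\beta+\Gamma_\beta(f)(g_\beta,g_\beta)\bigr).
\]
Hence in the $\Psi$-coordinates the overlap map acts on the fibre $F\times F$ by $(g,w)\mapsto(\operatorname{D}\Theta_{\alpha\beta}(f)\,g,\ \operatorname{D}\Theta_{\alpha\beta}(f)\,w)$, that is, by the diagonal action of $\operatorname{D}\Theta_{\alpha\beta}(f)\in\operatorname{Aut}(F)$.

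These transition maps are linear on the fibre and, being the diagonal of the $TM$-cocycle, define a smooth morphism $U_\alpha\cap U_\beta\to\operatorname{Aut}(F\times F)$. Thus the atlas $\{((\pi^2\mid_{T^2M})^{-1}(U_\alpha),\Psi_\alpha)\}_{\alpha\in\mathcal{A}}$ verifies the hypotheses of the vector bundle criterion (Proposition 1.2 of~\cite{l}), endowing $\pi^2\mid_{T^2M}:T^2M\to M$ with a vector bundle structure of fibre type $F\times F$ and structure group $\operatorname{Aut}(F)$ embedded diagonally in $\operatorname{Aut}(F\times F)$. Finally I would observe that $\Psi_\alpha$ is precisely the composite of $\Xi=(\pi_{12},\,\mathcal{K}\mid_{T^2M}):T^2M\to TM\oplus TM$ with the product trivialization of $TM\oplus TM$ over $U_\alpha$; since the cocycle just computed coincides with that of $TM\oplus TM$, the map $\Xi$ covers $\mathrm{id}_M$ and is fibrewise a linear isomorphism, hence a vector bundle isomorphism $T^2M\cong TM\oplus TM$.

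I would regard the cancellation of the quadratic term through~\eqref{ci} as the heart of the argument, though it is routine once that identity is in hand. The only genuinely bounded-Fr\'echet-specific obstacle requiring care is establishing that $\Psi_\alpha$ and its transition maps are $MC^\infty$ bundle morphisms: this rests on the continuity of evaluation and composition in $\mathcal{L}_d(F)$, properties that fail for general Fr\'echet spaces but hold in the present setting, and it is here that the choice of the bounded category is essential.
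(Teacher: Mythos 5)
Your proof is correct, but it takes a genuinely different route from the paper's. The paper argues globally: it invokes Dombrowski's splitting theorem~\cite{do} to assert that $\pi_2 \oplus \mathcal{K} \oplus T\pi_M : T(TM) \rightarrow TM \oplus TM \oplus TM$ is a diffeomorphism, restricts to the submanifold $T^2M$ (where $\pi_2 = T\pi_M$, so the splitting collapses to $\pi_2 \oplus \mathcal{K} : T^2M \rightarrow TM \oplus TM$), and transports the vector bundle structure of $TM \oplus TM$ through this diffeomorphism. You instead work locally: you build the linearized trivializations $\Psi_\alpha$ directly from the Christoffel symbols and verify that the compatibility condition~\eqref{ci}, applied with $g=h=g_\beta$, cancels the quadratic term $\operatorname{D^2}\Theta_{\alpha\beta}(f)(g_\beta,g_\beta)$ and reduces the overlap maps to the diagonal cocycle of $TM \oplus TM$. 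The two constructions yield the same isomorphism --- your $\Xi = (\pi_{12},\mathcal{K}\mid_{T^2M})$ is exactly the paper's $\pi_2 \oplus \mathcal{K}$ restricted to $T^2M$, and your $\Psi_\alpha$ is its local expression --- but your argument is more self-contained: the paper leaves unverified that Dombrowski's theorem, established originally outside the bounded Fr\'{e}chet category, actually transfers to it, and your cocycle computation is in effect that verification carried out on the relevant submanifold. Your route also displays precisely where linearity of the connection and condition~\eqref{ci} enter, identifies the structure group concretely as $\operatorname{Aut}(F)$ embedded diagonally in $\operatorname{Aut}(F\times F)$, and mirrors the paper's own proof of the converse (Theorem~\ref{tang2}), where Christoffel symbols are read off from linear trivializations; it is essentially the method of~\cite{gd} adapted to this setting. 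What the paper's approach buys in exchange is brevity and a chart-free, global statement of the isomorphism.
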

 \begin{proof}
 If we have a connection, then the connection map $\mathcal{K} : T(TM) \rightarrow M$ is defined. The following map
 \begin{equation}\label{di}
  \pi_2 \oplus \mathcal{K} \oplus T\pi_M : T(TM) \rightarrow TM \oplus TM \oplus TM
 \end{equation}
is a diffeomorphism (see~\cite{do}). The diffeomorphism determines a unique vector bundle structure for $T(TM)$ over $M$. 
Let $(U_{\alpha}, \varphi_{\alpha})$ be a chart of $M$. The induced chart $\{ (\pi_M^{-1}(U_\alpha),T\varphi_\alpha)\}$ in $TM$ takes
a vector bundle structure by means of the Diffeomorphism~\eqref{di}. Let $\imath : TM \oplus TM \rightarrow TM \oplus TM \oplus TM$ 
be the natural isomorphism. $T^2M$ is a submanifold of $T(TM)$ consisting of tangent vectors $\Upsilon$ such that $ \pi_2 (\Upsilon) = T\pi_M (\Upsilon)$. Therefore,
the inclusion $\imath$ is the isomorphism onto $(\pi_2 \oplus \mathcal{K} \oplus T\pi_M )(T^2M)$, thus
 \begin{equation*}
  \imath^{-1} \circ (\pi_2 \oplus \mathcal{K} \oplus T\pi_M )(T^2M) = \pi_2 \oplus \mathcal{K}(T^2M).
 \end{equation*}
Hence the diffeomorphism
\begin{equation}
 \pi_2 \oplus \mathcal{K} : T^2 M\rightarrow TM \oplus TM
\end{equation}
gives the structure of a vector bundle to $T^2M$. Since $T^2M$ is isomorphic to $TM \oplus TM$, it can be considered as a vector bundle with  group structure
$\operatorname {Aut}( F \times F)$. 
 \end{proof}
The proof of the following theorem is the same as the usual proof given for Banach manifolds (see~\cite[Theorem 2.4]{gd}). We just give literally the scheme of proof.   
 \begin{theorem}\label{tang2} 
  If $T^2M$ admits a vector bundle structure isomorphic to $TM\oplus TM$, then there exists a linear connection on $M$.
 \end{theorem}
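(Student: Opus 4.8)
The plan is to reverse the construction of Theorem~\ref{tang1}: from the hypothesised vector bundle isomorphism I would read off a candidate system of local Christoffel symbols $\Gamma_\alpha$, show that they obey the compatibility relation~\eqref{ci}, and then reassemble them into a linear connection map $\mathcal K$.

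First I would fix the natural trivialising atlas of $T^2M$ and record the second-order transition law. Writing $c_\beta=\varphi_\beta\circ\mathpzc f$ and differentiating $\varphi_\alpha\circ\mathpzc f=\Theta_{\alpha\beta}\circ c_\beta$ twice by the chain rule for $MC^k$-maps, a $2$-jet with $\beta$-coordinates $(f,g,k)=(c_\beta(0),c_\beta'(0),c_\beta''(0))$ acquires $\alpha$-coordinates
\begin{equation*}
 \bigl(\Theta_{\alpha\beta}(f),\ \operatorname{D}\Theta_{\alpha\beta}(f)g,\ \operatorname{D^2}\Theta_{\alpha\beta}(f)(g,g)+\operatorname{D}\Theta_{\alpha\beta}(f)k\bigr).
\end{equation*}
The term $\operatorname{D^2}\Theta_{\alpha\beta}(f)(g,g)$, quadratic in $g$, is exactly the obstruction to fibre-linearity, and it is this term that the assumed vector bundle structure must absorb.

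Next, denote the hypothesised isomorphism by $\Phi\colon T^2M\to TM\oplus TM$. Since $\pi_{12}\colon T^2M\to TM$ is the intrinsic projection onto first-order parts, I would first compose $\Phi$ with an automorphism of $TM\oplus TM$ so that $\operatorname{pr}_1\circ\Phi=\pi_{12}$, and then set $\mathcal K^0\coloneq\operatorname{pr}_2\circ\Phi\colon T^2M\to TM$, a bundle morphism over $M$. In the chart $\alpha$ its local representative is a smooth map $\kappa_\alpha\colon\varphi_\alpha(U_\alpha)\times F\times F\to F$; normalising $\Phi$ so that $\mathcal K^0$ is the identity on the $\pi_{12}$-fibres forces $\kappa_\alpha(f,g,k)=k+\kappa_\alpha(f,g,0)$. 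Using the natural rescaling $(g,k)\mapsto(\lambda g,\lambda^2 k)$ of $T^2M$ induced by $\mathpzc f(t)\mapsto\mathpzc f(\lambda t)$, I would check that $g\mapsto\kappa_\alpha(f,g,0)$ is homogeneous of degree two and hence, by polarisation, arises from a symmetric map $\Gamma_\alpha(f)\in\mathcal L(F\times F;F)$ with $\Gamma_\alpha(f)(g,g)=\kappa_\alpha(f,g,0)$. Via the canonical isomorphism of Lemma~\ref{iso} I then put $\tau_\alpha(f,g)\coloneq\Gamma_\alpha(f)(g,\cdot)\in\mathcal L_d(F)$, which is linear in $g$, and reassemble the candidate connection map by $\mathcal K_\alpha(f,g,h,k)=(f,\,k+\tau_\alpha(f,g)h)$.

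The heart of the argument, and the step I expect to be the main obstacle, is to verify that the family $\{\Gamma_\alpha\}$ satisfies~\eqref{ci}. This is forced rather than optional: because $\mathcal K^0$ is a single globally defined morphism, its local representatives must intertwine the $T^2M$-transition displayed above with the linear transition $\operatorname{D}\Theta_{\alpha\beta}$ on the target $TM$; substituting the former into the intertwining identity and matching the terms quadratic in $g$ reproduces precisely~\eqref{ci}. The remaining care is entirely of bounded-Fr\'echet type: I would use that $\operatorname{D}\Theta_{\alpha\beta}(f)\in\operatorname{Aut}(F)$ has Lipschitz inverse and that $\operatorname{D^2}\Theta_{\alpha\beta}(f)$ is a globally Lipschitz bilinear map, so that the compositions defining $\Gamma_\alpha$ stay inside $\mathcal L(F\times F;F)$ and depend smoothly ($MC^\infty$) on the base point, and that the resulting $\tau_\alpha$ take values in $\mathcal L_d(F)$ as the definition of a connection map requires. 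Once~\eqref{ci} holds, the local formulas glue to a smooth linear connection map $\mathcal K$ on $TM$, i.e.\ to a linear connection on $M$, as claimed.
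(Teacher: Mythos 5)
Your proposal follows the same overall strategy as the paper's (admittedly schematic) proof, which reverses Theorem~\ref{tang1} exactly as you describe: use the hypothesized vector bundle structure and the isomorphism with $TM\oplus TM$ to read off local Christoffel symbols as the defect between the vector-bundle trivialization and the naive second-order jet coordinates, recover off-diagonal values by polarization, and let the compatibility condition~\eqref{ci} follow from the fact that the local data all come from one globally defined object. In the paper this is done by splitting the trivializations $\Omega_{\alpha,p}=\Omega^{1}_{\alpha,p}\times\Omega^{2}_{\alpha,p}$ and setting $\Gamma_{\alpha}(y)(u,u)=\Omega^{2}_{\alpha,p}(j^2_p\mathpzc{f})-(\varOmega_{\alpha}\circ\mathpzc{f})''(0)$; your quantity $\kappa_\alpha(f,g,0)=\kappa_\alpha(f,g,k)-k$ is the same object in different clothing, and your transition computation and quadratic-term matching make explicit what the paper compresses into the sentence that the trivializations ``coincide on all common areas of their domains.''

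That said, two of your auxiliary steps are not justified by the hypothesis as stated, and this is a genuine gap. First, the normalization $\operatorname{pr}_1\circ\Phi=\pi_{12}$: to achieve it you must compose $\Phi$ with a vector bundle automorphism $A$ of $TM\oplus TM$ satisfying $\operatorname{pr}_1\circ A=\pi_{12}\circ\Phi^{-1}$, which forces $\pi_{12}\circ\Phi^{-1}$ --- equivalently $\pi_{12}$ itself --- to be fibrewise linear for the \emph{hypothesized} vector bundle structure on $T^2M$; nothing in the hypothesis grants that, since that structure is only assumed to exist and to be isomorphic to $TM\oplus TM$. Second, and more seriously, the degree-two homogeneity of $g\mapsto\kappa_\alpha(f,g,0)$: the jet-rescaling $(g,k)\mapsto(\lambda g,\lambda^2 k)$ is indeed intrinsic on $T^2M$, but $\Phi$ is only an isomorphism for the abstract vector bundle structure, and there is no reason it should intertwine jet-rescaling with the map $(v,w)\mapsto(\lambda v,\lambda^2 w)$ on $TM\oplus TM$; so the ``check'' you propose cannot be carried out from the stated hypothesis, and without it the polarization step (which needs quadraticity, not just a diagonal value) is unfounded. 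To be fair, the paper's own sketch has the same soft spots --- it assumes the trivializations split compatibly with the jet charts and decrees the bilinear extension --- but since your proposal explicitly undertakes to \emph{derive} these facts, the derivations need either an added compatibility assumption between the hypothesized structure and the canonical fibration $\pi^2\mid_{T^2M}$ (which is how the Banach-case argument of Dodson--Galanis, cited by the paper, should be read) or a genuinely different argument.
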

\begin{proof}
 Let $\left\lbrace(\varPi^{-1}(U_{\alpha}), \Omega_{\alpha})\right\rbrace_{\alpha \in \mathcal{A}}$ be a trivializing atlas of $T^2M$. By hypothesis 
 $\Omega_{\alpha,p} = \Omega_{\alpha,p}^{1} \times \Omega_{\alpha,p}^2$, where $\Omega_{\alpha,p}^{i} : \pi_M^{-1}(p) \rightarrow F$ (i=1,2).
 Let $(U,\varOmega)$ be an arbitrary chart such that $U \subseteq U_{\alpha}$. Define 
 $\varOmega_{\alpha} = \varOmega \circ (\Omega_{\alpha,p}^{1} \circ(\operatorname{D}_x \varOmega)^{-1})$. Then define the mappings as follows:
 \begin{equation*}
\tau_{\alpha}(u,u)(y)  = \Omega_{\alpha,p}^2 (j^2_p \mathpzc{f}) - (\varOmega_{\alpha} \circ \mathpzc{f})''(0), \quad y \in \varOmega_{\alpha}({U_{\alpha})},
 \end{equation*}
where $\mathpzc{f}$ is the representative of the vector $u$.  The remaining values of $\tau_{\alpha}(y)$ on elements of the form 
$(u,v)$ with $u \neq v$ are automatically defined if we demand $\tau_{\alpha}(y)$ to be symmetric and bilinear. They satisfy the compatibility condition since
 the trivializations $\left\lbrace(\varPi^{-1}(U_{\alpha}), \Omega_{\alpha})\right\rbrace_{\alpha \in \mathcal{A}}$ coincide 
 on all common areas of their domains, hence give rise to a linear connection on $M$.
\end{proof}

 \section{Vector fields on $TM$}
Having introduced the tangent bundle over a manifold $M$, we now consider sections of these bundles. 
A vector field on $M$ is a section $\xi :M \rightarrow TM$ of its tangent bundle, i.e. $\pi_M \circ \xi = \operatorname{id}_M$. 
For a vector field $\xi$ and a chart $U \subset M \xrightarrow {\varphi} \varphi(U) \subset F$, the principle part
$\xi_{\varphi} :\varphi (U) \rightarrow F$
of $\xi$ is defined by $\xi_{\varphi}(\varphi(p)) = \operatorname{pr_2} \circ T\varphi (\xi_p)$. Let $I$
be an open interval in $\mathbb{R}$ and let $\ell : I  \rightarrow M$ be a curve  passing through $p_0$.
If $\xi$ is a vector field on $M$ and if $\xi_{\varphi}$ denotes the principle part of its local representative in a chart $\varphi$, then $\ell(t)$ is called
an integral curve of $\xi$ when $(\varphi \circ \ell)'(t) = \xi_{\varphi}(\varphi \circ \ell(t))$ for each $t$,
where $\varphi \circ \ell$ is the local representative of the curve $\ell$.
Note that if the base manifold $M$ is a Fr\'{e}chet space $F$ with differential structure induced by the chart $(F, \operatorname{id}_F)$,
then the above condition reduces to: 
$\ell'(t) = \operatorname{D}{\ell(t)}(1_\mathbb{R})$. 
That is, our definition is a natural generalization of the notion of derivative on a manifold $M$.
\begin{prop} \label{ht}
 Let $U \subseteq F$ be open and let $\xi : U \rightarrow F$ be $MC^k, k\geq 1$. Then for $p_0 \in U$, there is an integral curve
 $\ell : I \rightarrow F$ at $p_0$. Furthermore, any two such curves are equal on the intersection of their domains. 
\end{prop}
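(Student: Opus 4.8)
The plan is to turn the differential condition into an integral fixed-point equation and to solve it by a contraction argument carried out directly with the metric $\parallel\cdot\parallel_d$. In the chart $(F,\operatorname{id}_F)$ the principal part of $\xi$ equals $\xi$ itself, so an integral curve at $p_0$ is a map $\ell$ with $\ell'(t)=\xi(\ell(t))$ and $\ell(0)=p_0$; since $\xi$ is at least $MC^1$ and hence continuous, this is equivalent to the integral equation $\ell(t)=p_0+\int_0^t\xi(\ell(s))\,ds$, where the integral is the Riemann integral of a continuous curve into the complete metric space $(F,d)$. First I would record the two elementary facts that make $\parallel\cdot\parallel_d$ usable like a seminorm here: translational invariance gives subadditivity $\parallel e+f\parallel_d\le\parallel e\parallel_d+\parallel f\parallel_d$, while absolute convexity of the balls of a standard metric gives the integral estimate $\parallel\int_0^t g(s)\,ds\parallel_d\le\int_0^t\parallel g(s)\parallel_d\,ds$. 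These two inequalities are what replace the norm in the usual Banach-space Picard scheme.

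The decisive ingredient is a local Lipschitz bound on $\xi$. Because $\xi$ is $MC^1$, the map $p\mapsto\operatorname{d}_p\xi\in\mathcal{L}_d(F)$ is continuous into the metric group $(\mathcal{L}_d(F),D_{d,d})$, so on a closed ball $\bar B=\{p:\parallel p-p_0\parallel_d\le r\}\subseteq U$ the operator norms $\parallel\operatorname{d}_p\xi\parallel_d$ are bounded by some $L<\infty$. Using the fundamental theorem of calculus for b-differentiable maps to write $\xi(p)-\xi(q)=\int_0^1\operatorname{d}_{q+t(p-q)}\xi\cdot(p-q)\,dt$ and taking $\parallel\cdot\parallel_d$ yields $\parallel\xi(p)-\xi(q)\parallel_d\le L\parallel p-q\parallel_d$ for $p,q\in\bar B$. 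With this in hand, existence follows from the Banach fixed-point theorem: choosing $\delta>0$ with $L\delta<1$ and $N\delta\le r$, where $N=\sup_{\bar B}\parallel\xi\parallel_d$, and setting $I=(-\delta,\delta)$, the space $X$ of continuous curves $I\to\bar B$ with the supremum metric $\rho(\ell_1,\ell_2)=\sup_{t\in I}\parallel\ell_1(t)-\ell_2(t)\parallel_d$ is complete, and the Picard operator $(T\ell)(t)=p_0+\int_0^t\xi(\ell(s))\,ds$ is a well-defined contraction with constant $L\delta$; its unique fixed point is the sought integral curve.

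For the uniqueness clause I would take two integral curves $\ell_1,\ell_2$ with a common domain and $\ell_1(0)=\ell_2(0)$, subtract their integral equations, and apply the Lipschitz estimate to obtain $\parallel\ell_1(t)-\ell_2(t)\parallel_d\le L\int_0^t\parallel\ell_1(s)-\ell_2(s)\parallel_d\,ds$ on any subinterval where both curves remain in a common ball. Gronwall's inequality then forces $\ell_1=\ell_2$ there, and the usual connectedness argument upgrades this to the whole intersection of the domains, since the agreement set is nonempty, closed, and — by the local uniqueness just established — open.

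The step I expect to require the most care is the passage from the hypothesis $MC^1$ to a genuine local Lipschitz estimate on $\xi$: one must justify the fundamental theorem of calculus and the resulting mean value inequality in the bounded Fr\'{e}chet category, and verify that $\parallel\operatorname{d}_p\xi\parallel_d$ is locally bounded. Both rest on the continuity of $p\mapsto\operatorname{d}_p\xi$ into $(\mathcal{L}_d(F),D_{d,d})$ together with the continuity of the evaluation map $\mathcal{L}_d(F)\times F\to F$ — precisely the features that the bounded setting, via the Lipschitz spaces $\mathcal{L}_{g,d}(E,F)$, was designed to supply. Everything else reduces to the two $\parallel\cdot\parallel_d$-inequalities above and the completeness of $(F,d)$.
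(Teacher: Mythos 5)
Your strategy is the same as the paper's: rewrite the ODE as the integral equation $\ell(t)=p_0+\int_{t_0}^{t}\xi(\ell(s))\,ds$, extract a local Lipschitz bound for $\xi$ from the $MC^1$ hypothesis, and run a Picard scheme; the paper performs the iteration explicitly with factorial estimates and proves uniqueness by an inductive comparison (deferring the open-closed connectedness argument to the next theorem), while you invoke the Banach fixed-point theorem and Gronwall. These differences would be purely cosmetic, except that your version rests on an inequality that is false in this setting.

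The ``elementary fact'' $\parallel\int_0^t g(s)\,ds\parallel_d\le\int_0^t\parallel g(s)\parallel_d\,ds$ does not follow from absolute convexity of the balls, and is in fact wrong: $\parallel\cdot\parallel_d$ is subadditive but not positively homogeneous, so no factor of interval length can be pulled out of an integral. Take $F=\mathbb{R}$ with the standard metric $\parallel x\parallel_d=|x|/(1+|x|)$ and $g\equiv 2$ on $[0,1/2]$: the left-hand side is $\parallel 1\parallel_d=1/2$ while the right-hand side is $1/3$. What absolute convexity does give is only the mean-value estimate: for $b-a\le 1$ the average $\tfrac{1}{b-a}\int_a^b g$ lies in the closed absolutely convex hull of the values, hence $\parallel\int_a^b g\parallel_d\le\sup_s\parallel g(s)\parallel_d$, with no gain as $b-a\to 0$. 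This breaks both of your key steps: the Picard operator then satisfies only $\rho(T\ell_1,T\ell_2)\le L\,\rho(\ell_1,\ell_2)$ --- the contraction constant is $L$, not $L\delta$, so shrinking $\delta$ does not help --- and the Gronwall hypothesis $u(t)\le L\int_0^t u(s)\,ds$ is again exactly the false inequality. Nor can you arrange $L<1$: operator norms in $\mathcal{L}_d(F)$ of nonzero maps are bounded below, e.g.\ $\parallel c\,\mathrm{id}_F\parallel_d\ge 1$ for every scalar $c\ne 0$, so already for the harmless field $\xi=\mathrm{id}_F$ your operator is merely nonexpansive. (The paper's own factorial estimates also need a time factor of this kind and appeal to Gl\"{o}ckner's Lemma 1.10 for it, so the difficulty is not yours alone; but your proposed justification of the estimate is not valid, and with the estimates that absolute convexity really provides, both your existence and your uniqueness arguments collapse.) A second, minor and fixable point: continuity of $p\mapsto\operatorname{d}_p\xi$ bounds $\parallel\operatorname{d}_p\xi\parallel_d$ only on a sufficiently small ball chosen via continuity at $p_0$; closed balls in $F$ are not compact, so boundedness on an arbitrary closed ball $\bar{B}\subseteq U$ is not automatic.
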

\begin{proof}
 Since $\xi$ is $MC^k$, it is bounded, say by $R$. Let $L$ be a positive real number. Pick a positive real number $r$  such that  $\overline{B_r(p_0)} \subseteq U$
 and $\parallel \xi (p) \parallel_d \leq L$ for all $p \in \overline{B_r(p_0)} $. Let $m = \min \{1/R, r/L\}$ and let $t_0$ be a real number.
 We shall show that there is a unique $MC^1$-curve $\ell(t)$, $t \in [t_0 - m, t_0 + m]$ whose image lies in $\overline{B_r(p_0)}$ and
 that satisfies
 \begin{equation} \label{piz}
  \ell'(t) = \xi (\ell (t)), \quad \ell (t_0) = p_0.
 \end{equation}
The conditions $\ell'(t) = \xi (\ell (t)), \, \ell (t_0) = p_0$ are equivalent to the integral equation
\begin{equation}\label{eq:piz}
 \ell(t) = p_0 + \displaystyle \int_{t_0}^t \xi (\ell(u))du
\end{equation}
Now define $\ell_n(t)$ by induction
\begin{equation*}
 \ell_0(t)= p_0 , \quad \ell_{n+1}(t) = p_0 + \displaystyle \int_{t_0}^t \xi (\ell_{n}(u))du.
\end{equation*}
The estimation on the size of integral (see~\cite[Lemma 1.10]{glockner}) yields $\ell_n(t) \in \overline{B_r(p_0)}$ for all $n$ and $t \in [t_0 - m, t_0 + m]$.
Furthermore, 
\begin{equation*}
 \parallel \ell_{n+1}(t) - \ell_n (t)\parallel_d \,\leq \dfrac{LR^n}{(n+1)!} \mid t -t_0 \mid^{n+1}.
\end{equation*}
To see this, assume that
\begin{equation*}
\| \ell_{n}(t) - \ell_{n-1} (t)\|_{d} \,\leq \dfrac{LR^{n-1}}{(n)!} \mid t -t_0 \mid^{n}.
\end{equation*}
Then we estimate as follows: (again assuming that $ t \geq t_0  $ for simplicity)
\begin{align*}
\| \ell_{n+1}(t) - \ell_n (t)\|_{d} &= \left\| \displaystyle \int_{t_0}^t \xi (\ell_{n}(u)) - \xi (\ell_{n-1}(u))d{u} \right\|_{d} \\
&\leq \displaystyle \int_{t_0}^t R \|\xi (\ell_{n}(u)) - \xi(\ell_{n-1}(u)) \|_{d}d{u} \\
&\leq R\displaystyle \int_{t_0}^t \dfrac{LR^{n-1}}{n!}(u-t_0)^n  d{u} \\
&=\dfrac{LR^{n}}{(n+1)!}(u-t_0)^{n+1}.
\end{align*}
Thus, since
\begin{equation*}
\dfrac{LR^{n}}{(n+1)!}(u-t_0)^{n+1}\leq \dfrac{LR^{n}}{(n+1)!}(m)^{n+1}
\end{equation*}
and the series with these quantities as terms is convergent, we see, writing  $$ \| \ell_{n+p} - \ell_{n}\|_{d} $$ as a telescoping sum, that the functions $\ell_n$ 
form a uniformly Cauchy sequence and hence converge uniformly to a continuous
curve $\ell(t)$ satisfying~\eqref{eq:piz}.
Since $\ell(t)$ is continuous, the integral
equation in fact shows that it is $ MC^{1} $. This proves existence.
Now let $\jmath(t)$ be another solution.
By a similar induction argument as above, we find that	
\begin{equation*}
\|  \ell_n (t) - \jmath (t)\|_{d} \,\leq \dfrac{LR^n}{(n+1)!} \mid t -t_0 \mid^{n+1}.
\end{equation*}
Therefore, letting $n \rightarrow \infty$ gives $\ell(t) = \jmath(t)$.

\end{proof}  
\begin{cor}\label{s}
  Suppose the hypotheses of the previous proposition hold. Let $\mathcal{I}_t(p_0)$ be the solution of $\ell'(t) = \xi (\ell (t)), \, \ell (t_0) = p_0$.
  Then there is an open neighborhood $U_0$ of $p_0$ and a positive real number $\alpha$  such that for every $q \in U_0$ there exists a unique integral
  curve $\ell(t) = \mathcal{I}_t(q)$ satisfying  $\ell(0) = q$ and $\ell'(t) = \xi (\ell (t))$ for all $t \in (-\alpha,\alpha)$.
\end{cor}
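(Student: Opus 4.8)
The plan is to extract from the proof of Proposition~\ref{ht} the observation that the interval of existence is governed by constants which can be chosen uniformly over a whole neighborhood of $p_0$, and then to apply Proposition~\ref{ht} to each nearby initial point using these common constants.

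First I would recall the explicit data produced in the previous proof: there is a radius $r>0$ with $\overline{B_r(p_0)}\subseteq U$ and $\parallel\xi(p)\parallel_d \le L$ for all $p\in\overline{B_r(p_0)}$, while $\xi$ is globally bounded by $R$; the integral curve issuing from $p_0$ then lives on $[t_0-m,\,t_0+m]$ with $m=\min\{1/R,\,r/L\}$. The essential point is that none of $R$, $L$, $m$ is tied to the specific basepoint beyond the requirement that a ball of fixed radius about the basepoint be contained in $\overline{B_r(p_0)}$, where the sup-bound $L$ on $\xi$ is already available.

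Next I would set $\rho=r/2$ and take $U_0=B_\rho(p_0)$. For any $q\in U_0$ the triangle inequality gives $\overline{B_\rho(q)}\subseteq\overline{B_r(p_0)}\subseteq U$, so $\xi$ remains bounded by $L$ on $\overline{B_\rho(q)}$ and by $R$ everywhere. Applying Proposition~\ref{ht} with initial time $t_0=0$, initial point $q$, radius $\rho$, and the same constants $L,R$ then yields a unique $MC^1$ integral curve with $\ell(0)=q$, defined on $[-m',m']$ where $m'=\min\{1/R,\,\rho/L\}$ is independent of $q$. Putting $\alpha=m'$ and writing $\ell(t)=\mathcal{I}_t(q)$ furnishes the asserted curve on $(-\alpha,\alpha)$, and its uniqueness on overlapping domains is precisely the uniqueness clause of Proposition~\ref{ht}.

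The content here is bookkeeping rather than fresh analysis: the Picard iteration and its geometric-series estimates were already carried out in Proposition~\ref{ht}, and the size estimate for the integral of~\cite[Lemma 1.10]{glockner} transfers verbatim. The one step that genuinely requires care—and the step I would treat as the main, if modest, obstacle—is verifying that a single radius $\rho$ and a single bound $L$ can be used for all $q\in U_0$ simultaneously; this is exactly what the inclusion $\overline{B_\rho(q)}\subseteq\overline{B_r(p_0)}$ delivers, and it is what forces the shrinkage from $r$ to $\rho=r/2$ in the choice of $U_0$.
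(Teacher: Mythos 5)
Your proposal is correct and follows essentially the same route as the paper: the paper likewise takes $U_0$ to be the ball of radius $r/2$ about $p_0$, uses the inclusion $\overline{B_{r/2}(q)}\subset \overline{B_r(p_0)}$ to retain the bound $L$ on $\xi$, and invokes Proposition~\ref{ht} with $p_0$ replaced by $q$, $r$ by $r/2$, and $t_0$ by $0$, arriving at the same uniform $\alpha=\min\{1/R,\,r/2L\}$. The only (cosmetic) difference is that you take $U_0$ open, which in fact matches the statement's wording better than the paper's choice of the closed ball.
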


\begin{proof}
 Suppose $U_0 = \overline{B_{r/2}(p_0)}$ and $\alpha = \min \{1/R, r/2L\}  $. Fix an arbitrary point $q_0$  in $U_0$. Then $\overline{B_{r/2}(q_0)} \subset \overline{B_r (p_0)}$,
 thereby $\parallel \xi(z) \parallel_d < L$ for all $z \in \overline{B_{r/2}(q_0)}$. By Proposition~\ref{ht} with $p_0$ replaced by $q$, $r$ by $r/2$, and
 $t_0$ by $ 0$,
 for all $t \in (-\alpha,\alpha)\ $there exists a unique  integral curve $\ell(t)$ such that $\ell(0)=q$. 
\end{proof}
The proof of the following theorem is the same as the usual proof given for Banach manifolds (see~\cite[Theorem 2.1]{l}).
\begin{theorem}
 Let $\xi :M \rightarrow TM$ be a vector field. Then  there exits an integral curve for $\xi$ at $p \in M$. Furthermore, any two such curves are equal on the intersection of their domains.
\end{theorem}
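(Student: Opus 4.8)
The plan is to localize the problem through a chart, invoke Proposition~\ref{ht} to produce a local solution, pull it back to $M$ for existence, and then obtain uniqueness by a connectedness argument along the common domain. First I would choose an admissible chart $(U,\varphi)$ with $p \in U$ and set $p_0 = \varphi(p) \in \varphi(U) \subseteq F$. The principal part $\xi_\varphi : \varphi(U) \rightarrow F$ is $MC^k$ with $k \geq 1$ (being assembled from the $MC^\infty$ chart maps and the section $\xi$), so Proposition~\ref{ht} furnishes a curve $\gamma : I \rightarrow \varphi(U)$ with $\gamma(0) = p_0$ and $\gamma'(t) = \xi_\varphi(\gamma(t))$. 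Setting $\ell \coloneq \varphi^{-1} \circ \gamma$ gives a curve in $M$ with $\ell(0) = p$, and since $\varphi \circ \ell = \gamma$ the defining identity $(\varphi \circ \ell)'(t) = \xi_\varphi(\varphi \circ \ell(t))$ holds; hence $\ell$ is an integral curve of $\xi$ at $p$.

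For uniqueness I would let $\ell_1,\ell_2$ be two integral curves at $p$ defined on intervals whose intersection $J$ contains $0$, and consider $S = \{t \in J \mid \ell_1(t) = \ell_2(t)\}$. The set $S$ is non-empty, since $0 \in S$, and closed because $M$ is Hausdorff and the $\ell_i$ are continuous. To see that $S$ is open, I would fix $t_0 \in S$ with $q = \ell_1(t_0) = \ell_2(t_0)$ and pick a chart $(V,\psi)$ around $q$; in these coordinates both $\psi \circ \ell_1$ and $\psi \circ \ell_2$ solve the same equation $\gamma' = \xi_\psi(\gamma)$ with the same value at $t_0$, so the uniqueness clause of Proposition~\ref{ht} forces them to coincide on a neighborhood of $t_0$. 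Thus $S$ is open, and since $J$ is connected we conclude $S = J$.

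The step that requires care — the main obstacle — is checking that the integral curve condition is chart-independent, which is exactly what lets the uniqueness argument pass across overlapping charts. If $\theta = \psi \circ \varphi^{-1}$ is a transition map, then the principal parts transform by $\xi_\psi(\theta(x)) = \operatorname{D}\theta(x)\cdot\xi_\varphi(x)$, and differentiating $\psi \circ \ell = \theta \circ (\varphi \circ \ell)$ via the chain rule for $MC^k$-maps (\cite[Lemma B.1 (f)]{glockner}) yields $(\psi \circ \ell)' = \operatorname{D}\theta(\varphi \circ \ell)\cdot(\varphi \circ \ell)'$. Combining these two identities shows that the defining condition in the $\varphi$-chart is equivalent to the one in the $\psi$-chart. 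With this compatibility established, both the construction of $\ell$ in the existence part and the openness of $S$ in the uniqueness part are independent of the particular charts chosen, which completes the proof.
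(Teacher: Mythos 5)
Your proof is correct and follows essentially the same route as the paper's: existence by local representation in a chart together with Proposition~\ref{ht}, and uniqueness by showing that the agreement set of the two curves is nonempty, closed (by Hausdorffness) and open (by the local uniqueness of Proposition~\ref{ht}) in the connected intersection of their domains. The only differences are cosmetic: the paper reduces to initial time $0$ via the time-shifted curves $\delta_i(u)=\rho_i(u+t)$ where you invoke Proposition~\ref{ht} directly at $t_0$, and you additionally verify the chart-independence of the integral-curve condition, a detail the paper leaves implicit.
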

\begin{proof}
The existence follows from Proposition~\ref{ht} by means of local representation. But that is not applicable for the proof of uniqueness since these curves may
lie in different charts. Let $\rho_i(t) : I_i \rightarrow M$ (i=1,2) be two integral curves. Let $I = I_1 \cap I_2$ and  $J = \{ t \in I \mid \rho_1(t) = \rho_2 (t) \}$.
$J$ is closed since $M$ is Hausdorff. From Proposition~\ref{ht}, $J$ contains some neighborhood of 0. 
Now define $\delta_1(u) = \rho_1 (u + t )$ and
 $\delta_2 (u)= \rho_2 (u + t)$ for $t \in J$. They are integral curves  with initial conditions $\rho_1(t)$
 and $\rho_2(t)$, respectively. By Proposition~\ref{ht} they coincide on a some neighborhood of 0. Therefore, $J$ contains an open neighborhood of $t$, so 
 $J$ is open. Since $I$ is connected it follows that $J = I$. 
 
\end{proof}

\end{document}